\newtheorem{theorem}{Theorem}
\newtheorem{proposition}[theorem]{Proposition}
\newtheorem{question}{Question}
\newcounter{paraga}[section]
\renewcommand{\theparaga}{{\bf\arabic{paraga}.}}
\newcommand{\paraga}{\medskip \addtocounter{paraga}{1} 
\noindent{\theparaga\ } }
\begin{document}

\def\MP{\,{<\hspace{-.5em}\cdot}\,}
\def\SP{\,{>\hspace{-.3em}\cdot}\,}
\def\PM{\,{\cdot\hspace{-.3em}<}\,}
\def\PS{\,{\cdot\hspace{-.3em}>}\,}
\def\EP{\,{=\hspace{-.2em}\cdot}\,}
\def\PP{\,{+\hspace{-.1em}\cdot}\,}
\def\PE{\,{\cdot\hspace{-.2em}=}\,}
\def\N{\mathbb N}
\def\C{\mathbb C}
\def\Q{\mathbb Q}
\def\R{\mathbb R}
\def\T{\mathbb T}
\def\A{\mathbb A}
\def\Z{\mathbb Z}
\def\demi{\frac{1}{2}}

\begin{titlepage}
\author{Abed Bounemoura~\footnote{Email: abedbou@gmail.com. CNRS-IMPA, UMI 2924.}}
\title{\LARGE{\textbf{Instability of resonant invariant tori in Hamiltonian systems}}}
\end{titlepage}

\maketitle

\begin{abstract}
In this article, we consider the dynamics in a neighborhood of a quasi-periodic torus which is invariant by a Hamiltonian flow, we discuss several notions of stability and we prove several results of instability when the frequency of the invariant torus is resonant.
\end{abstract}

\section{Introduction and main results}

\paraga Let $n \geq 2$ and $\T^n:=\R^n/\Z^n$. Consider a Hamiltonian system on $\T^n \times \R^n$ associated to a $C^l$, $l \geq 2$, function of the form
\begin{equation}\label{Ham1}
H(\theta,I)=\omega \cdot I + A(\theta)I\cdot I+R(\theta,I), \quad (\theta,I)\in \T^n \times \R^n  
\end{equation}
where $\cdot$ denotes the Euclidean inner product, $\omega \in \R^n$, $A: \T^n \rightarrow \mathrm{Sym}(n,\R)$ is a $C^l$ map taking values in the space of real symmetric matrices of size $n$, and $R(\theta,I)=O_3(I)$ is of order at least $3$ in $I$. The set $\mathcal{T}_\omega:=\T^n \times \{I=0\}$ is invariant by the Hamiltonian flow of $H$, it is a Lagrangian quasi-periodic torus with frequency $\omega$, and any such torus (on an arbitrary symplectic manifold) is of this form. 

Assuming that the frequency $\omega$ is non-resonant, that is, $k\cdot \omega\neq 0$ for any $k\in\Z^n \setminus \{0\}$, the invariant torus $\mathcal{T}_\omega$ possesses, generically, some stability properties. First, if $H$ is smooth and the invariant torus is Kolmogorov non-degenerate (that is, the symmetric matrix $A_0:=\int_{\T^n}A(\theta)d\theta$ is non-singular), then it is \emph{KAM stable}: in any sufficiently small neighborhood of $\mathcal{T}_\omega$, there is a set of smooth Lagrangian quasi-periodic invariant tori which has positive Lebesgue measure and density one at $\mathcal{T}_\omega$, and which are obtained by a small deformation of the unperturbed invariant tori (those associated to the linear part of the Hamiltonian~\eqref{Ham1}). This result was first proved in \cite{EFK}, under extra assumptions on $\omega$ and $H$ (namely that $\omega$ has finite uniform Diophantine exponent and $H$ is real-analytic), and the general case was subsequently obtained in \cite{Bou14}. A consequence of the KAM stability is the following stability property: for any $\varepsilon>0$ sufficiently small, most solutions $(\theta(t),I(t))$ for which $I(0)$ has a norm equals to $\varepsilon$ satisfy
\begin{equation}\label{stabKAM}
|I(t)-I(0)| \leq \varepsilon\sqrt{\mu(\varepsilon)}, \quad t \in \R 
\end{equation}
where $\mu(\varepsilon)$ is a function going to zero as $\varepsilon$ goes to zero (this function depends on the arithmetic properties of $\omega$, see \cite{Bou14} or the definition~\eqref{mu} below). 

Still assuming $\omega$ non-resonant and without further assumptions, the invariant torus is also \emph{effectively stable} (or Nekhoroshev stable): for all solutions one has the estimate
\begin{equation}\label{stabNek}
|I(t)-I(0)| \leq \varepsilon\mu(\varepsilon), \quad |t| \leq T(\varepsilon)
\end{equation}
where $T(\varepsilon)$ is at least of order $\varepsilon^{-1}$. More precisely, if $H$ is smooth then $T(\varepsilon)$ is of order $\varepsilon^{-1}\mu(\varepsilon)^{-l}$ for any fixed $l \in \N$, and if $H$ is real-analytic (or Gevrey smooth), this latter power estimate can be replaced by an exponential estimate. Proofs can be found in~\cite{Bou12}, \cite{Bou13a} and~\cite{Bou13b}. In fact, in view of the results in~\cite{BFN14}, one may expect that generically, the above exponential stability estimate can be improved to a double exponential estimate.   

Finally, there is another more classical notion of stability: the invariant torus is \emph{topologically stable} (or Lyapounov stable) if it has a basis of invariant neighborhoods. Observe that for $n=2$, if the invariant torus is KAM stable on each energy level sufficiently close to the zero energy level (which contains the invariant torus), then it is topologically stable, but in general this notion is not related to KAM stability or effective stability. Extrapolating on conjectures of Arnold (see \cite{Arn94}), one expects that generically, for $n \geq 3$, the torus is topologically unstable. However, essentially nothing is known (in the smooth case one only knows how to construct examples, see \cite{Dou88}) and absolutely nothing is known in the real-analytic case (no examples are known).

\paraga  Our aim here is to investigate the case where one drops the requirement that the frequency $\omega$ is non-resonant. More precisely, our aim is to prove instability results for a generic Hamiltonian as in~\eqref{Ham1}, assuming that the frequency $\omega$ is resonant. 

To state our results, we start by assuming that the vector $\omega$ is of the form 
\begin{equation}\label{omega}
\omega=(0,\tilde{\omega}) \in \R^d \times \R^{n-d}
\end{equation}
for some $1 \leq d \leq n-1$ and some non-resonant vector $\tilde{\omega} \in \R^{n-d}$. This is no loss of generality, as by a linear symplectic transformation one can always put $\omega$ in this form. Up to a constant time change, we may also assume that $|\omega|=|\tilde{\omega}|=1$, where $|\,.\,|$ denotes the supremum norm.

Next we define, for $Q \geq 1$, the function $\Psi=\Psi_\omega$ by
\[ \Psi(Q):=\max\{|k\cdot\tilde{\omega}|^{-1} \; | \; k=(k_1,\dots,k_{n-d}) \in \Z^{n-d}, \; 0< |k_1|+\cdots+|k_{n-d}|\leq Q \} \] 
and for $x\geq \Psi(1)=|\tilde{\omega}|^{-1}=1$, the function $\Delta=\Delta_\omega$ by
\[ \Delta(x):=\sup\{Q \geq 1 \; | \; Q\Psi(Q)\leq x\}. \]
For some constant $\kappa$ which depends only on $n$, we define, given a small parameter $\varepsilon>0$, another small parameter
\begin{equation}\label{mu}
\mu(\varepsilon):=\left(\Delta\left(\kappa\varepsilon^{-1}\right)\right)^{-1}.
\end{equation}
Observe that $\mu(\varepsilon)$ always converge to zero as $\varepsilon$ goes to zero, more slowly than $\varepsilon$: for instance, if $\omega$ is periodic (a multiple of a rational vector), then $\mu(\varepsilon)$ is exactly of order $\varepsilon$, and if $\omega$ is resonant-Diophantine (meaning that it is not rational but the function $\Psi$ defined above grows at most as a power), then $\mu(\varepsilon)$ is of order a power of $\varepsilon$. In general, the speed of convergence to zero can be arbitrarily slow.

Recall that we are considering $H$ as in~\eqref{Ham1} in a small neighborhood of the origin, which, without loss of generality, will be chosen to be the domain
\begin{equation*}
\T^n \times B_{3\varepsilon}:=\T^n \times \{I\in \R^n  \; | \; |I|<3\varepsilon \}
\end{equation*}
for $\varepsilon>0$. Letting
\begin{equation}\label{scale1}
I:=\varepsilon \tilde{I}, \quad H(\theta,I):=\varepsilon\tilde{H}(\theta,\tilde{I}),
\end{equation}
it is then equivalent to consider the Hamiltonian
\begin{equation}\label{Ham2}
\tilde{H}(\theta,\tilde{I})=\varepsilon^{-1}H(\theta,I)=\varepsilon^{-1}H(\theta,\varepsilon \tilde{I})=\omega \cdot \tilde{I} + \varepsilon A(\theta)\tilde{I}\cdot \tilde{I}+\varepsilon^2 \tilde{R}(\theta,\tilde{I}) 
\end{equation}
on the domain $\T^n \times B_3$, with the estimates
\begin{equation}\label{bounds}
|A|_{C^l(\T^n)} \leq C_1, \quad |\tilde{R}|_{C^l(\T^n \times B_3)} \leq C_2.
\end{equation}
Here, $|\,.\,|_{C^l(\T^n \times B_3)}$ denotes the usual $C^l$-norm on the domain $\T^n \times B_3$, and $|\,.\,|_{C^l(\T^n)}$ the usual $C^l$-norm on $\T^n$. 

Let us now define the map $\bar{A} : \T^d \rightarrow \mathrm{Sym}(n,\R)$ by
\begin{equation}\label{Abar}
\bar{A}(\theta_1,\dots,\theta_d):=\int_{\T^{n-d}}A(\theta_1,\dots,\theta_d,\theta_{d+1},\dots,\theta_n)d\theta_{d+1}\dots d\theta_n.
\end{equation}
Obviously, $\bar{A}$ is of class $C^l$ and $|A|_{C^l(\T^d)} \leq C_1$. Our first generic assumption is as follows.

\bigskip

$(A.1)$ The function $\bar{A} : \T^d \rightarrow \mathrm{Sym}(n,\R)$ is non-constant. Hence there exist $\theta^* \in \T^d$ and an integer $i=i(\theta^*)$, $1 \leq i \leq d$, such that $A_i^*:=\partial_{\theta_i}\bar{A}(\theta^*) \in \mathrm{Sym}(n,\R)$ is non-zero.

\bigskip

This assumption $(A.1)$ is important to derive instability results; in Theorem~\ref{mainthm4} below we will see that if $(A.1)$ is not satisfied, then generically the invariant torus is KAM stable.

Now given the assumption $(A.1)$, let us define 
\[ C_i^*:=\{v \in \R^n \; | \; A_i^*v\cdot v=0\}, \]
that is $C_i^*$ is the isotropic cone of the quadratic form associated to the matrix $A_i^*$. Since $A_i^*$ is non-zero, the complement of $C_i^*$ in $\R^n$ is open, dense and of full Lebesgue measure.  

\paraga We can finally state our first result.

\begin{theorem}\label{mainthm1}
Let $H$ be as in~\eqref{Ham1} satisfying~\eqref{bounds} with $l\geq 3$ and with $\omega$ as in~\eqref{omega}. Assume that $(A.1)$ is satisfied, and fix $\tilde{I}^0 \in \R^n \setminus C_i^*$ with $|\tilde{I}^0|=1$. There exist positive constants $\mu_0=\mu_0(n,C_1,C_2,A_i^*,\tilde{I}^0)$, $c=c(n,C_1,A_i^*,\tilde{I}^0)$ and $\delta=\delta(n,C_1,A_i^*,\tilde{I}^0)$ such that if 
\[ 0<\mu(\varepsilon)\leq \mu_0, \]
then there exists a solution $(\theta(t),I(t))$ of the Hamiltonian system associated to~\eqref{Ham1} such that
\[ I(0)=\varepsilon \tilde{I^0}, \quad |I(\tau)-I(0)|\geq |I_i(\tau)-I_i(0)|\geq c\varepsilon, \quad \tau:=\delta\varepsilon^{-1}.  \] 
Moreover, there exists a positive constant $C=c(n,C_1,C_2,,A_i^*,\tilde{I}^0)$ such that
\[ \max_{d+1 \leq j \leq n}|I_j(\tau)-I_j(0)|\leq C\varepsilon\mu(\varepsilon).  \]
\end{theorem}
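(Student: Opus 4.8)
The plan is to work in the rescaled variables $(\theta,\tilde I)$ of~\eqref{scale1}, for which the Hamiltonian is $\tilde H$ of~\eqref{Ham2} and, crucially, the flow of $\tilde H$ in $(\theta,\tilde I)$ coincides with the flow of $H$ in $(\theta,I)$ in the \emph{same} time $t$ (the scaling $I=\varepsilon\tilde I$, $H=\varepsilon\tilde H$ being symplectic up to the overall constant, which cancels in Hamilton's equations). The equations of motion then read
\begin{equation*}
\dot\theta=\omega+2\varepsilon A(\theta)\tilde I+\varepsilon^2\partial_{\tilde I}\tilde R,\qquad \dot{\tilde I}_k=-\varepsilon\,\partial_{\theta_k}A(\theta)\,\tilde I\cdot\tilde I-\varepsilon^2\partial_{\theta_k}\tilde R .
\end{equation*}
Since $\omega_1=\cdots=\omega_d=0$, the angles $\theta_1,\dots,\theta_d$ and the actions $\tilde I$ are \emph{slow} (velocities $O(\varepsilon)$), whereas $\theta_{d+1},\dots,\theta_n$ are \emph{fast}, rotating with the non-resonant frequency $\tilde\omega$. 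Over the time $\tau=\delta\varepsilon^{-1}$ the slow variables move by $O(\delta)$ only, while $\tilde I_i$ can change by an amount of order $\delta$, which after undoing the scaling is the announced order-$\varepsilon$ variation of $I_i$. The heuristic behind the theorem is that, to leading order, averaging $\partial_{\theta_i}A$ over the fast angles replaces it by $\partial_{\theta_i}\bar A$ (see~\eqref{Abar}), so that $\dot{\tilde I}_i\approx-\varepsilon\,\partial_{\theta_i}\bar A(\theta_1,\dots,\theta_d)\,\tilde I\cdot\tilde I$, a quantity of definite sign and size $\varepsilon$ near $(\theta^*,\tilde I^0)$ precisely because $A_i^*\tilde I^0\cdot\tilde I^0\neq0$ when $\tilde I^0\notin C_i^*$.

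To make this rigorous I would first perform a resonant averaging (normal form) step removing the dependence of $\tilde H$ on the fast angles. Since $\tilde\omega$ is non-resonant, the resonant module of $\omega=(0,\tilde\omega)$ is exactly $\Z^d\times\{0\}$, so a symplectic transformation $\Phi$, obtained by solving homological equations $\tilde\omega\cdot\partial_\theta\chi=(\text{oscillating part})$ for Fourier modes $0<|k_{d+1}|+\cdots+|k_n|\le K$, brings $\tilde H$ into
\begin{equation*}
\tilde H\circ\Phi(\theta,\tilde I)=\omega\cdot\tilde I+\varepsilon\,\bar N(\theta_1,\dots,\theta_d,\tilde I)+\tilde{\mathcal R}(\theta,\tilde I),
\end{equation*}
where $\bar N$ depends on the slow angles only and has leading part $\bar A(\theta_1,\dots,\theta_d)\tilde I\cdot\tilde I$. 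The small divisors are bounded below by $\Psi(K)^{-1}$, so that $\Phi$ is $O(\mu(\varepsilon))$-close to the identity (in the rescaled actions) and the fast-angle derivatives of the remainder satisfy $|\partial_{\theta_j}\tilde{\mathcal R}|=O(\varepsilon\mu(\varepsilon))$ for $d+1\le j\le n$, once the cutoff $K$ is optimized against the divisor growth and the $C^l$-decay of the Fourier tail. It is exactly this optimization, $K\Psi(K)\le\kappa\varepsilon^{-1}$ with $K=\Delta(\kappa\varepsilon^{-1})$, that produces $\mu(\varepsilon)$ as in~\eqref{mu}. This averaging step, with its sharp dependence on $\Psi$, is the main technical obstacle; the rest is comparatively routine.

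Granting the normal form, I close the argument by a bootstrap on $[0,\tau]$. For the confinement, the normalized Hamiltonian is independent of the fast angles up to $\tilde{\mathcal R}$, so $\dot{\tilde I}_j=-\partial_{\theta_j}\tilde{\mathcal R}=O(\varepsilon\mu(\varepsilon))$ for $j\ge d+1$; integrating over $\tau=\delta\varepsilon^{-1}$ and transporting back through $\Phi$ gives $|\tilde I_j(\tau)-\tilde I_j(0)|=O(\mu(\varepsilon))$, that is $\max_{d+1\le j\le n}|I_j(\tau)-I_j(0)|\le C\varepsilon\mu(\varepsilon)$. For the drift, I start from $\theta_1(0)=\cdots=\theta_d(0)=\theta^*$ and $\tilde I(0)=\tilde I^0$. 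As long as the trajectory stays within $O(\delta)$ of $(\theta^*,\tilde I^0)$, the slow angles and actions move only by $O(\delta)$, so by continuity of $\partial_{\theta_i}\bar A$ (which is $C^{l-1}$, $l\ge 3$) and smallness of $\delta$, the integrand $\partial_{\theta_i}\bar A\,\tilde I\cdot\tilde I$ stays within a factor $\tfrac12$ of $A_i^*\tilde I^0\cdot\tilde I^0\neq0$ and keeps a fixed sign. Integrating $\dot{\tilde I}_i=-\varepsilon\,\partial_{\theta_i}\bar A(\theta_1,\dots,\theta_d)\,\tilde I\cdot\tilde I+O(\varepsilon\mu(\varepsilon)+\varepsilon^2)$ then yields
\begin{equation*}
|\tilde I_i(\tau)-\tilde I_i(0)|\ge\tfrac12\,|A_i^*\tilde I^0\cdot\tilde I^0|\,\varepsilon\tau-O(\mu(\varepsilon)+\delta^2)=\tfrac12\,|A_i^*\tilde I^0\cdot\tilde I^0|\,\delta-O(\mu(\varepsilon)+\delta^2).
\end{equation*}

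Choosing $\delta$ small (to absorb the $\delta^2$ term and validate the continuity estimate) and then $\mu_0$ small enough that $\mu(\varepsilon)\le\mu_0$ makes the errors negligible, giving $|\tilde I_i(\tau)-\tilde I_i(0)|\ge c$ for a suitable $c\sim\delta\,|A_i^*\tilde I^0\cdot\tilde I^0|$. This simultaneously confirms, a posteriori, that the slow variables remained within $O(\delta)$ of $(\theta^*,\tilde I^0)$ and that $|\tilde I|$ stayed in $B_3$, closing the bootstrap. Undoing the scaling $I=\varepsilon\tilde I$ gives $|I_i(\tau)-I_i(0)|\ge c\varepsilon$; the inequality $|I(\tau)-I(0)|\ge|I_i(\tau)-I_i(0)|$ is immediate, and the theorem follows.
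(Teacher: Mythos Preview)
Your proposal is correct and follows essentially the same route as the paper: rescale via~\eqref{scale1}, apply a one-step resonant normal form (which the paper quotes as Proposition~\ref{normalform} from \cite{Bou13b}, with exactly the $\mu(\varepsilon)$-estimate you describe), then run the bootstrap on $[0,\tau]$ using that the slow variables $(\theta_1,\dots,\theta_d,\tilde I)$ vary only by $O(\delta)$ so that $\partial_{\theta_i}\bar A\,\tilde I\cdot\tilde I$ keeps a definite sign, and finally transport through $\Phi$ using $|\Phi-\mathrm{Id}|=O(\mu(\varepsilon))$. The only cosmetic difference is that the paper invokes the normal form as a black box rather than sketching the Fourier truncation and small-divisor optimization as you do.
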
 

Theorem~\ref{mainthm1} clearly shows that generically, the invariant torus is not effectively stable as the estimate~\eqref{stabNek} cannot be satisfied for all solutions. It also shows that generically it is not KAM stable. Indeed, the statement implies the existence of ``many" solutions for which $I(0)$ has a norm equals to $\varepsilon$ but yet $|I(\tau)-I(0)|\geq c\varepsilon$, for some positive time $\tau$ and some constant $c$ independent of $\varepsilon$, provided the latter is sufficiently small. By ``many" it is meant that we can find such solution for any choice of $\varepsilon\tilde{I}^0$, provided $\tilde{I}^0 \notin C_i^*$ and $\mu(\varepsilon)\leq \mu_0$ where $\mu_0$ depends on $\tilde{I}^0$: on any sufficiently small ball around the origin, the set of initial action that are not concerned with Theorem~\ref{mainthm1} consists of small cusps around directions associated to $C_i^*$, and the relative measure of this set goes to zero as the radius of the ball goes to zero. Hence the consequence~\eqref{stabKAM} of KAM stability cannot be true, and this shows that under our assumption $(A.1)$, the invariant torus cannot be KAM stable.  

\paraga Theorem~\ref{mainthm1} implies the existence of solutions, starting arbitrarily close to the invariant torus, and for which the action variables deviate from its initial condition. Yet it does imply the existence of solutions for which the action variables deviate from zero (which corresponds to the invariant torus), or equivalently, solutions for which the action variables increase in norm. Under an extra generic assumption, this can be achieved. Let $e_i$ be the $i$-th vector of the canonical basis of $\R^n$, where the index $i$ is the one given by $(A.1)$.

\bigskip

$(A.2)$ The vector $e_i$ does not belong to $C_i^*$, that is $A_i^*e_i\cdot e_i:=a_i^*\neq 0$. Reversing time if necessary, we may assume that $a_i^*<0$.

\bigskip

Here's our second result.

\begin{theorem}\label{mainthm2}
Let $H$ be as in~\eqref{Ham1} satisfying~\eqref{bounds} with $l\geq 3$ and with $\omega$ as in~\eqref{omega}. Assume that $(A.1)$ and $(A.2)$ are satisfied. There exist positive constants $\mu_0=\mu_0(n,C_1,C_2,a_i^*)$, $c=c(n,C_1,a_i^*)$ and $\delta=\delta(n,C_1,a_i^*)$ such that if 
\[ 0<\mu(\varepsilon)\leq \mu_0, \]
then there exists a solution $(\theta(t),I(t))$ of the Hamiltonian system associated to~\eqref{Ham1} such that
\[ I(0)=\varepsilon e_i, \quad |I(\tau)|\geq (1+c)\varepsilon=(1+c)|I(0)|, \quad \tau:=\delta\varepsilon^{-1}.  \]
Moreover, there exists a positive constant $C=c(n,C_1,C_2,a_i^*)$ such that
\[ \max_{d+1 \leq j \leq n}|I_j(\tau)-I_j(0)|\leq C\varepsilon\mu(\varepsilon).  \] 
\end{theorem}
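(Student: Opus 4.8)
The plan is to obtain Theorem~\ref{mainthm2} as a refinement of Theorem~\ref{mainthm1}, the only genuinely new ingredient being control of the \emph{sign} of the displacement of the $i$-th action. First observe that $(A.2)$ is precisely the statement that $e_i \notin C_i^*$: indeed $A_i^* e_i \cdot e_i = a_i^* \neq 0$ says exactly that $e_i$ does not lie on the isotropic cone of $A_i^*$. Since moreover $|e_i|=1$, Theorem~\ref{mainthm1} applies with the choice $\tilde{I}^0 = e_i$, and it produces a solution with $I(0)=\varepsilon e_i$, with $|I_i(\tau)-I_i(0)| \geq c\varepsilon$ at $\tau = \delta\varepsilon^{-1}$, and satisfying $\max_{d+1\le j\le n}|I_j(\tau)-I_j(0)|\le C\varepsilon\mu(\varepsilon)$. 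The ``Moreover'' assertion of Theorem~\ref{mainthm2} is thus already granted; what remains is to upgrade the lower bound $|I_i(\tau)-I_i(0)|\ge c\varepsilon$ to the \emph{signed} statement $I_i(\tau)\ge(1+c)\varepsilon$, for then, in the supremum norm, $|I(\tau)| = \max_j|I_j(\tau)| \ge I_i(\tau) \ge (1+c)\varepsilon = (1+c)|I(0)|$.

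To fix the sign I would return to the rescaled picture~\eqref{Ham2}. In the original time the rescaled variables obey Hamilton's equations for $\tilde{H}$, whence $\dot{\tilde{I}}_i = -\varepsilon\,\partial_{\theta_i}\!\big(A(\theta)\tilde{I}\cdot\tilde{I}\big) - \varepsilon^2\partial_{\theta_i}\tilde{R}$, a quantity of size $O(\varepsilon)$ (the term $\omega\cdot\tilde{I}$ contributes nothing since it is $\theta$-independent). Because $\omega_i=0$ for $1\le i\le d$, the angle $\theta_i$ is slow, with $\dot{\theta}_i = O(\varepsilon)$, while the angles $\theta_{d+1},\dots,\theta_n$ rotate with the non-resonant frequency $\tilde{\omega}$; averaging over the latter (the step that confines the construction to $\mu(\varepsilon)\le\mu_0$ and generates the remainder of size $\mu(\varepsilon)$) replaces $A$ by $\bar{A}$. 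The leading contribution to $\dot{\tilde{I}}_i$ is therefore $-\varepsilon\,\partial_{\theta_i}\!\big(\bar{A}(\theta_1,\dots,\theta_d)\tilde{I}\cdot\tilde{I}\big)$, which at the initial data $\theta=\theta^*$, $\tilde{I}=e_i$ equals $-\varepsilon\,A_i^* e_i\cdot e_i = -\varepsilon\,a_i^*>0$, precisely because $(A.2)$ normalizes $a_i^*<0$. Hence $\tilde{I}_i$ starts to increase.

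The heart of the proof is then a bootstrap (Gronwall-type) argument showing that this sign persists on the whole interval $[0,\tau]$. There the slow angle $\theta_i$ and the components $\tilde{I}_j$ with $j\neq i$ drift by only $O(\delta)$, since their velocities are $O(\varepsilon)$ and $\tau=\delta\varepsilon^{-1}$; consequently $\tilde{I}(t)$ stays within $O(\delta)$ of $e_i$, and $\partial_{\theta_i}\bar{A}(\theta_1(t),\dots,\theta_d(t))$ stays within $O(\delta)$ of $A_i^*$, the latter being controlled solely through the a priori bound $|\bar{A}|_{C^l(\T^d)}\le C_1$. Thus $A_i^*\tilde{I}(t)\cdot\tilde{I}(t)$ remains within $O(\delta)$ of $a_i^*<0$ throughout. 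Choosing $\delta$ and $\mu_0$ small enough, depending only on $n$, $C_1$, $C_2$ and $a_i^*$, so that these $O(\delta)$ corrections together with the averaging remainder $O(\mu(\varepsilon))$ stay below $\tfrac{1}{2}|a_i^*|$, one secures $\dot{\tilde{I}}_i \ge \tfrac{1}{2}\varepsilon|a_i^*|$ on all of $[0,\tau]$. Integrating gives $\tilde{I}_i(\tau)-1 \ge \tfrac{1}{2}|a_i^*|\,\delta =: c$, hence $I_i(\tau)\ge(1+c)\varepsilon$, which is the claim. The main obstacle is exactly this persistence of sign: one must confine the slow variables to a neighborhood of $(\theta^*,e_i)$ and simultaneously keep the averaging error under control for the full time $\tau$, and it is the closing of this bootstrap — rather than any estimate genuinely new relative to Theorem~\ref{mainthm1} — that constitutes the actual work.
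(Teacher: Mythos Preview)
Your proposal is correct and follows essentially the same approach as the paper: apply the proof of Theorem~\ref{mainthm1} with the specific choice $\tilde{I}^0=e_i$ (so that $\gamma_i^*=|a_i^*|/4$), and use the sign normalization $a_i^*<0$ from $(A.2)$ to see that the leading term $-\varepsilon\,\partial_{\theta_i}\bar A\,\tilde I\cdot\tilde I$ is positive near $(\theta^*,e_i)$, which upgrades $|I_i(\tau)-I_i(0)|\ge c\varepsilon$ to $I_i(\tau)-I_i(0)\ge c\varepsilon$. The paper's write-up is terser (it just says ``proceed exactly as in Theorem~\ref{mainthm1}'' and then reads off the sign), but the content is the same; one cosmetic difference is that the paper also checks $|I_k(\tau)|<1$ for $k\neq i$ to conclude $|I(\tau)|=I_i(\tau)$, whereas your inequality $|I(\tau)|\ge I_i(\tau)$ already suffices.
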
 

This theorem implies in particular that given any sufficiently small ball $B$ around $0 \in \R^{n}$, the domain $\T^n \times B$ cannot be invariant by the Hamiltonian flow. However, this is still far from proving that the invariant torus is not topologically stable. The statement of Theorem~\ref{mainthm2} (and of Theorem~\ref{mainthm1} also) only gives information on the behavior of solutions for a finite time scale of order $\varepsilon^{-1}$. To prove topologically instability, one needs to be able to control solutions over infinite interval of times, and this is a non-trivial task. The following question is therefore still open.

\begin{question}
Consider a Hamiltonian $H$ as in~\eqref{Ham1} and assume that $\omega$ is resonant. Is is true that, generically, the invariant torus $\mathcal{T}_\omega$ is topologically unstable? 
\end{question}

One expects that the answer to this question is positive, even without the assumption that $\omega$ is resonant. As explained in the Introduction, the case where $\omega$ is non-resonant is much more complicated. One only knows examples of topologically unstable tori in the smooth category; in the analytic category, examples are not known, even examples with the weaker instability property expressed in Theorem~\ref{mainthm2}.

If $\omega$ is resonant, examples are easily constructed, even in the analytic case. For instance, consider the Hamiltonian
\[ H(\theta,I)=\omega \cdot I +A(\theta_1)I\cdot I \]
where $\omega=(0,\tilde{\omega})\in \R^d \times \R^{n-d}$ with $\tilde{\omega}$ non-resonant, $A(\theta_1)$ is a diagonal matrix of the form $\mathrm{Diag}(a_1(\theta_1),\dots,a_n(\theta_1))$ with functions $a_i(\theta_1)$, $1 \leq i \leq n$, chosen such that $a_1$ is identically zero and $a_j'(\theta_1^*)>0$ for some $\theta_1^* \in \T$ and some $2 \leq j \leq n$. Fix the initial angle to be $\theta(0)=(\theta_1^*,\theta_2(0),\dots,\theta_n(0))$ with $(\theta_2(0),\dots,\theta_n(0)) \in \T^{n-1}$ arbitrary. Choosing any non-zero initial action proportional to $e_j$, one easily sees that the corresponding solution is defined for all time and that the first action component is unbounded, and hence the invariant torus is not topologically stable. 

Coming back to Theorem~\ref{mainthm1}, we argued that it implies that the invariant torus cannot be KAM stable. Yet, for the same reason as we are not able to decide if topological instability holds in general (which is that we only control the solution on a finite time scale), we cannot decide also if the invariant torus is accumulated by other quasi-periodic invariant tori.  

\begin{question}
Consider a Hamiltonian $H$ as in~\eqref{Ham1} and assume that $\omega$ is resonant. Is the invariant torus $\mathcal{T}_\omega$ generically accumulated by other quasi-periodic invariant tori or is it generically isolated? 
\end{question}

Looking at the example we give above, it is easy to observe that any initial action proportional to $e_1$ leads to a quasi-periodic invariant tori. Hence even the much simpler question below is left unanswered.

\begin{question}
Construct a Hamiltonian $H$ as in~\eqref{Ham1} with $\omega$ resonant for which the invariant torus $\mathcal{T}_\omega$ is isolated.
\end{question}

\paraga Coming back again to Theorem~\ref{mainthm1}, one drawback of the statement is that one has to exclude small cusps around isotropic directions of the symmetric matrix $A_i^*$. If the latter is empty, one obtains a uniform statement. Let us introduce another condition.
   
\bigskip

$(A.3)$ The symmetric matrix $A_i^*$ is sign-definite. Reversing time if necessary, we may assume that it is negative definite, hence there exists a negative constant $\lambda_i^*$ such that for all $I\in \R^n$, $A_i^*I\cdot I \leq \lambda_i^*|I|^2$. 

\bigskip

This condition is obviously much stronger than the conditions $(A.1)$ and $(A.2)$ together. Even though this condition is open, it is clearly no more generic. 

Our third result is as follows.  

\begin{theorem}\label{mainthm3}
Let $H$ be as in~\eqref{Ham1} satisfying~\eqref{bounds} with $l\geq 3$ and with $\omega$ as in~\eqref{omega}. Assume that $(A.3)$ is satisfied. There exist positive constants $\mu_0=\mu_0(n,C_1,C_2,\lambda_i^*)$, $c=c(n,C_1,\lambda_i^*)$ and $\delta=\delta(n,C_1,\lambda_i^*)$ such that if 
\[ 0<\mu(\varepsilon)\leq \mu_0, \]
then, given any $\tilde{I}_0 \in \R^n$ such that $|\tilde{I}_0|=1$, there exists a solution $(\theta(t),I(t))$ of the Hamiltonian system associated to~\eqref{Ham1} such that
\[ I(0)=\varepsilon \tilde{I}_0, \quad |I(\tau)-I(0)|\geq I_i(\tau)-I_i(0)\geq c\varepsilon, \quad \tau:=\delta\varepsilon^{-1}.  \] 
Moreover, there exists a positive constant $C=c(n,C_1,C_2,\lambda_i^*)$ such that
\[ \max_{d+1 \leq j \leq n}|I_j(\tau)-I_j(0)|\leq C\varepsilon\mu(\varepsilon).  \] 
\end{theorem}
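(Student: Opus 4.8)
The plan is to follow the same averaging scheme that underlies Theorem~\ref{mainthm1}, and to extract from the stronger hypothesis $(A.3)$ two improvements: the conclusion becomes uniform in the initial direction $\tilde{I}_0$, and the deviation of $I_i$ acquires a definite sign. I work with the rescaled Hamiltonian~\eqref{Ham2}, whose flow coincides with that of~\eqref{Ham1}, so that it suffices to produce, in the variables $(\theta,\tilde{I})$, a solution with $\tilde{I}(0)=\tilde{I}_0$ for which $\tilde{I}_i(\tau)-\tilde{I}_i(0)\geq c$ at $\tau=\delta\varepsilon^{-1}$. First I would note that, $A_i^*$ being sign-definite, its isotropic cone is trivial, $C_i^*=\{0\}$; hence every $\tilde{I}_0$ with $|\tilde{I}_0|=1$ lies outside $C_i^*$, and moreover $|A_i^*\tilde{I}_0\cdot\tilde{I}_0|\geq|\lambda_i^*|$ \emph{uniformly} on the unit sphere. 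Thus the exclusion of cusps in Theorem~\ref{mainthm1} is vacuous here and all constants may be taken independent of $\tilde{I}_0$, matching the dependence on $\lambda_i^*$ alone. The fast-action estimate $\max_{d+1\le j\le n}|I_j(\tau)-I_j(0)|\le C\varepsilon\mu(\varepsilon)$ is then exactly the one already furnished by Theorem~\ref{mainthm1}, so it remains only to upgrade the bound on $I_i$ to a signed one.

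The core computation is the evolution of the resonant action. From~\eqref{Ham2}, for $1\le i\le d$,
\[
\dot{\tilde{I}}_i=-\varepsilon\,\partial_{\theta_i}\big(A(\theta)\tilde{I}\cdot\tilde{I}\big)-\varepsilon^2\partial_{\theta_i}\tilde{R},
\]
so that, writing $\phi:=(\theta_1,\dots,\theta_d)$ and splitting $A=\bar{A}+(A-\bar{A})$ according to~\eqref{Abar},
\[
\tilde{I}_i(\tau)-\tilde{I}_i(0)=-\varepsilon\int_0^\tau\partial_{\theta_i}\bar{A}(\phi(t))\,\tilde{I}(t)\cdot\tilde{I}(t)\,dt+\mathcal{E},
\]
where $\mathcal{E}$ collects the contribution of the fast-oscillating part $\partial_{\theta_i}(A-\bar{A})$ and of the $O(\varepsilon^2)$ term. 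The estimate $|\mathcal{E}|\le C\mu(\varepsilon)$ is precisely the averaging lemma on which Theorem~\ref{mainthm1} rests: along the flow the fast angles $(\theta_{d+1},\dots,\theta_n)$ rotate at a speed close to the non-resonant $\tilde{\omega}$, so that integrating $\partial_{\theta_i}(A-\bar{A})$ — which has zero average over these angles — against the slowly varying $\tilde{I}\cdot\tilde{I}$ produces only small divisors $k\cdot\tilde{\omega}$; cutting the Fourier series of $A$ at order $Q$, balancing the divisor $\Psi(Q)$ against the $C^l$ tail and against $\varepsilon^{-1}$, and using the definition~\eqref{mu} of $\mu$ through $\Delta$, yields the claimed bound. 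I would quote this estimate from the proof of Theorem~\ref{mainthm1} rather than reproduce it.

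It then remains to bound the main integral from below, and here $(A.3)$ does all the work. I would fix the initial angle with $\phi(0)=\theta^*$ and choose $\delta$ small. Since $|\dot{\tilde{I}}|=O(\varepsilon)$ and $|\dot{\phi}|=O(\varepsilon)$ on the domain, a continuity argument shows that over the interval $[0,\tau]$ with $\tau=\delta\varepsilon^{-1}$ the action stays in $|\tilde{I}(t)|\in[\tfrac12,\tfrac32]$ while the slow angle $\phi(t)$ stays in a neighborhood of $\theta^*$; by continuity of $\partial_{\theta_i}\bar{A}$ and openness of negative-definiteness, on that neighborhood $\partial_{\theta_i}\bar{A}(\phi)v\cdot v\le\tfrac12\lambda_i^*|v|^2$ for all $v$. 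Consequently $-\partial_{\theta_i}\bar{A}(\phi(t))\tilde{I}\cdot\tilde{I}\ge\tfrac12|\lambda_i^*|\,|\tilde{I}|^2\ge\tfrac18|\lambda_i^*|$ throughout, so the main integral is at least $\tfrac18|\lambda_i^*|\delta$. Taking $\mu_0$ and $\varepsilon$ small enough that $|\mathcal{E}|\le\tfrac1{16}|\lambda_i^*|\delta$ gives $\tilde{I}_i(\tau)-\tilde{I}_i(0)\ge\tfrac1{16}|\lambda_i^*|\delta=:c>0$, whence $I_i(\tau)-I_i(0)\ge c\varepsilon$ with the required uniformity. In contrast with Theorem~\ref{mainthm1}, the integrand now has a \emph{fixed} sign for every $\tilde{I}_0$, which is exactly why the modulus can be removed and why no direction need be excluded.

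The main obstacle is, as in Theorems~\ref{mainthm1}--\ref{mainthm2}, the averaging estimate for $\mathcal{E}$: making the control of the oscillatory integral quantitative in the merely $C^l$ and possibly only mildly non-resonant setting, with the correct dependence on $\mu(\varepsilon)$ dictated by~\eqref{mu}. Everything specific to Theorem~\ref{mainthm3} — the uniformity in $\tilde{I}_0$ and the definite sign — is by comparison elementary, being a direct consequence of the sign-definiteness of $A_i^*$ together with keeping $\phi$ close to $\theta^*$ for a short macroscopic time $\delta$.
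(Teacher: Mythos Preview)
Your proposal is correct and matches the paper's own argument, which is literally one line: run the proof of Theorem~\ref{mainthm1} with an arbitrary $I_0$ on the unit sphere and $\gamma_i^*=-\lambda_i^*/4$, this choice being independent of $I_0$ precisely because $(A.3)$ gives $|A_i^*I_0\cdot I_0|\ge|\lambda_i^*|$ uniformly, while the fixed sign of $A_i^*$ removes the modulus on $I_i(\tau)-I_i(0)$. Your explicit handling of the sign and of the uniformity is, if anything, more detailed than the paper's.

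One point worth flagging: the ``averaging lemma on which Theorem~\ref{mainthm1} rests'' that you intend to quote is not, in the paper, an oscillatory-integral bound on $\varepsilon\int_0^\tau\partial_{\theta_i}(A-\bar A)\,\tilde I\cdot\tilde I\,dt$ in the original coordinates. It is instead the one-step resonant normal form of Proposition~\ref{normalform}: a symplectic $\Phi$ with $|\Phi-\mathrm{Id}|_{C^{l-1}}\le C_*\mu(\varepsilon)$ such that $\tilde H\circ\Phi=\omega\cdot I+\varepsilon\bar A(\theta_1,\dots,\theta_d)I\cdot I+\varepsilon\mu(\varepsilon)f'$, after which the equations of motion are read off directly in the new variables and the oscillating part $A-\bar A$ has already been absorbed into the $\varepsilon\mu(\varepsilon)$ remainder. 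Your integration-by-parts\slash Fourier-truncation scheme is the equivalent computation carried out without changing variables; it would work, but what you will actually find to cite in the paper is the normal form, and the argument is slightly cleaner that way since one never has to integrate along a perturbed fast flow.
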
 

\paraga Finally, as we already mentioned, let us discuss briefly what happens if the condition $(A.1)$ is not satisfied. More precisely, consider the following assumption.

\bigskip

$(A.4)$ There exists a non-singular matrix $A_0 \in \mathrm{Sym}(n,\R)$ such that for all $(\theta_1,\dots,\theta_d)\in \T^d$, $\bar{A}(\theta_1,\dots,\theta_d)=A_0$.

\bigskip

Under the following assumption, one can prove, as in~\cite{Bou14}, that the invariant torus is KAM stable provided the Hamiltonian is sufficiently smooth.

Here's our last result. 

\begin{theorem}\label{mainthm4}
Let $H$ be as in~\eqref{Ham1} satisfying~\eqref{bounds} with $l\geq l_0+1>3n$ and with $\omega$ as in~\eqref{omega}. Assume that $(A.4)$ is satisfied. Then there exist positive constants $\mu_0=\mu_0(n,C_1,C_2,A_0)$, $c_1=c(n,C_1,C_2,A_0)$ and $c_2=c_2(n,C_1,C_2,A_0)$ such that if
\[ 0 < \mu(\varepsilon) \leq \mu_0,\]
there exists a set $\mathcal{K} \subset \T^n \times B_{2\varepsilon}$, which consists of Lagrangian quasi-periodic tori invariant by the Hamiltonian flow of $H$. Moreover, each tori is of class $C^{l_0'}$, for $l_0'<l_0-2n+1$, and we have the measure estimate
\[ c_1\sqrt{\mu(\varepsilon)}\mathrm{Leb}(\T^n \times B_{2\varepsilon}) \leq \mathrm{Leb}(\T^n \times B_{2\varepsilon} \setminus \mathcal{K}) \leq  c_{2}\sqrt{\mu(\varepsilon)}\mathrm{Leb}(\T^n \times B_{2\varepsilon}). \]
\end{theorem}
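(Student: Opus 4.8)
The plan is to reduce, by an averaging procedure adapted to the resonant structure of $\omega$, to a Kolmogorov non-degenerate integrable system perturbed at a size controlled by $\mu(\varepsilon)$, and then to invoke the finitely differentiable KAM theorem together with the measure estimates established in \cite{Bou14}. Throughout I work with the rescaled Hamiltonian \eqref{Ham2} on $\T^n \times B_3$, for which the perturbation of the integrable part $\omega \cdot \tilde I$ has size of order $\varepsilon$. The point of assumption $(A.4)$ is that it makes the leading averaged system genuinely integrable and non-degenerate: the averaged twist will be the constant non-singular matrix $A_0$.

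Write $\theta = (\check\theta, \hat\theta)$ with $\check\theta = (\theta_1,\dots,\theta_d) \in \T^d$ the slow angles (whose frequencies vanish) and $\hat\theta = (\theta_{d+1},\dots,\theta_n) \in \T^{n-d}$ the fast angles, which rotate with the non-resonant frequency $\tilde\omega$. First I would solve the homological equation $\omega \cdot \partial_\theta \chi = g - \bar g^{\hat\theta}$, where $\bar g^{\hat\theta}$ denotes the average over the fast angles only: expanding in Fourier series, the only small divisors are $k \cdot \omega = \hat k \cdot \tilde\omega$ for $\hat k \neq 0$, which are bounded below by $\Psi(|k|)^{-1}$ thanks to the non-resonance of $\tilde\omega$. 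Truncating the Fourier expansion at order $Q$ and performing one (or finitely many) steps of averaging with the time-one map of $\chi$, the optimal choice $Q \sim \Delta(\kappa\varepsilon^{-1}) = \mu(\varepsilon)^{-1}$ balances the Fourier tail against the small divisors $\Psi(Q)$, since then $\varepsilon\Psi(Q) \sim Q^{-1} \sim \mu(\varepsilon)$. This produces a symplectic transformation $\Phi$, which is $O(\mu(\varepsilon))$-close to the identity (hence maps $B_2$ into $B_3$), such that
\[ \tilde H \circ \Phi(\theta,\tilde I) = \omega\cdot\tilde I + \varepsilon\bar A(\check\theta)\tilde I\cdot\tilde I + f(\theta,\tilde I), \qquad |f|_{C^{l'}} \lesssim \varepsilon\mu(\varepsilon), \]
where the finite loss of differentiability $l \to l'$ is the usual one for a normal form step. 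Here is where $(A.4)$ enters: $\bar A(\check\theta) \equiv A_0$, so the leading part $h(\tilde I) := \omega\cdot\tilde I + \varepsilon A_0\tilde I\cdot\tilde I$ is integrable, its frequency map $\tilde I \mapsto \nabla h(\tilde I) = \omega + 2\varepsilon A_0\tilde I$ is a diffeomorphism onto a ball of radius of order $\varepsilon$ centered at $\omega$, with non-degenerate (twist) Jacobian $2\varepsilon A_0$, while the full angle dependence has been pushed into the perturbation $f$ of size $\varepsilon\mu(\varepsilon)$.

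I would then apply, exactly as in \cite{Bou14}, the finitely differentiable KAM theorem to $h + f$, preserving the tori carrying a frequency $\nu = \omega + 2\varepsilon A_0\tilde I$ that satisfies the Diophantine-type condition adapted to the arithmetic of $\omega$. The smallness condition reads, schematically, $|f| \lesssim (\text{twist})\cdot\gamma^2$, i.e. $\varepsilon\mu(\varepsilon) \lesssim \varepsilon\gamma^2$, so the admissible Diophantine constant is $\gamma \sim \sqrt{\mu(\varepsilon)}$. Each surviving torus is Lagrangian, quasi-periodic, and of class $C^{l_0'}$ with $l_0' < l_0 - 2n + 1$, the $2n$-type loss being the one inherent to KAM in finite differentiability (and the hypotheses $l \geq l_0+1 > 3n$ guarantee enough derivatives to run both the normal form and the iteration). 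Collecting these tori gives the set $\mathcal K \subset \T^n\times B_{2\varepsilon}$. For the measure estimate I would argue as in \cite{Bou14}: in the frequency ball of radius of order $\varepsilon$ around $\omega$, the set of $\nu$ violating the Diophantine condition with constant $\gamma \sim \sqrt{\mu(\varepsilon)}$ has relative measure comparable to $\gamma$ from above and from below (the upper bound because away from a $\sqrt{\mu(\varepsilon)}$-neighborhood of the resonances all tori persist, the lower bound because such a neighborhood is genuinely excluded by the construction), and pulling back through the twist diffeomorphism and $\Phi$ turns this into the two-sided estimate $c_1\sqrt{\mu(\varepsilon)} \leq \mathrm{Leb}(\T^n\times B_{2\varepsilon}\setminus\mathcal K)/\mathrm{Leb}(\T^n\times B_{2\varepsilon}) \leq c_2\sqrt{\mu(\varepsilon)}$.

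The main obstacle is the arithmetic bookkeeping that ties together the three places where the small divisors enter: the truncation order $Q$ and the remainder size in the averaging step, the smallness threshold in the KAM iteration, and the measure of excluded frequencies — all of which must be expressed through the single function $\mu(\varepsilon)$ via the definitions of $\Psi$ and $\Delta$, and optimized simultaneously (this optimization is what produces the exponent $1/2$). A secondary difficulty is propagating the finite differentiability coherently: one must track the derivative losses in the averaging transformation and in each KAM step so that the final regularity $C^{l_0'}$ is reached under the hypothesis $l > 3n$, and verify that the constants depend only on $n, C_1, C_2, A_0$ and not on $\varepsilon$. Since all of this is carried out in the non-resonant case in \cite{Bou14} and the averaging above reduces the resonant case to precisely that situation, the remaining work is to check that those estimates go through verbatim for frequencies in a full neighborhood of the resonant vector $\omega$.
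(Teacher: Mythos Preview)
Your approach is essentially the same as the paper's: rescale to $\T^n\times B_3$, apply the resonant normal form (the paper quotes this as Proposition~\ref{normalform} from \cite{Bou13b}, whose content you describe), use $(A.4)$ to make the averaged twist the constant non-singular matrix $A_0$, and then invoke the finitely differentiable KAM theorem following \cite{Bou14}/\cite{Pos82}. The only presentational difference is that the paper rescales time by $\varepsilon$ so that the Hessian becomes $2A_0$ (independent of $\varepsilon$) before applying P\"oschel's theorem, whereas you keep the $\varepsilon$-dependent twist $2\varepsilon A_0$ and absorb it into the smallness condition; the former makes the $\varepsilon$-independence of the constants immediate, which you flag as something to check.
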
 

Again, as in~\cite{Bou14}, if the Hamiltonian is more regular (smooth, Gevrey smooth or real-analytic), then the invariant tori found are as regular as the Hamiltonian. But at variance with~\cite{Bou14}, one does not get a better measure estimate if the Hamiltonian is Gevrey smooth or real-analytic.

\section{Proof of the results}

\paraga The proof of Theorem~\ref{mainthm1} follows the strategy of \cite{BK14}. The essential step in the argument is the construction of a global resonant normal, which was done in \cite{Bou13a} and that we recall below. Proofs of Theorem~\ref{mainthm2} and Theorem~\ref{mainthm3} proceed exactly the same way. Finally, the proof of Theorem~\ref{mainthm1} is analogous to the proof of the main theorem of~\cite{Bou14}.

\paraga Recall that, up to the scalings~\eqref{scale1}, the Hamiltonian~\eqref{Ham1} on the domain $\T^n \times B_{3\varepsilon}$ is equivalent to the Hamiltonian~\eqref{Ham2} on the domain $\T^n \times B_{3}$, so let us consider the latter. We define $f=f_\varepsilon$ by
\[ f(\theta,\tilde{I}):=A(\theta)\tilde{I}\cdot \tilde{I}+\varepsilon \tilde{R}(\theta,\tilde{I}) \]
so that the Hamiltonian~\eqref{Ham2} can be written as
\begin{equation}\label{Ham3}
\tilde{H}(\theta,\tilde{I})=\omega \cdot \tilde{I} + \varepsilon f(\theta,\tilde{I}).
\end{equation}
Let us also define the average, with respect to the resonant linear flow associated to $\omega=(0,\tilde{\omega})\in \R^d \times \R^{n-d}$, of the perturbation by
\begin{eqnarray*}
\bar{f}(\theta_1,\dots,\theta_d,\tilde{I}) 
& := & \int_{\T^{n-d}}f(\theta_1,\dots,\theta_d,\theta_{d+1},\dots,\theta_n,\tilde{I})d\theta_{d+1}\dots d\theta_{n} \\
& = & \bar{A}(\theta_1,\dots,\theta_d)\tilde{I}\cdot \tilde{I} +\varepsilon \bar{R}(\theta_1,\dots,\theta_d,\tilde{I}).
\end{eqnarray*}

The Hamiltonian~\eqref{Ham2}, written as in~\eqref{Ham3}, is an $\varepsilon$-perturbation of a linear integrable Hamiltonian, and we have the following resonant normal form.

\begin{proposition}\label{normalform}
There exist $\mu_*=\mu_*(n,C_1,C_2)$ and $C_*=C_*(n,C_1,C_2)$ such that if
\[ 0<\mu(\varepsilon)\leq \mu_*, \]
then there exists a symplectic embedding $\Phi : \T^n \times B_2 \rightarrow \T^n \times B_3$ of class $C^{l-1}$ such that
\[ \tilde{H} \circ \Phi (\theta,\tilde{I}) = \omega \cdot \tilde{I} +\varepsilon \bar{f}(\theta_1,\dots,\theta_d,\tilde{I})+ \varepsilon\mu(\varepsilon)\tilde{f}(\theta,\tilde{I}) \]
with the estimates $|\Phi-\mathrm{Id}|_{C^{l-1}(\T^n \times B_2)} \leq C_*\mu(\varepsilon)$ and $|\tilde{f}|_{C^{l-1}(\T^n \times B_2)} \leq C_*$.
\end{proposition}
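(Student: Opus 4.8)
The goal is a single-step resonant normal form: average the perturbation $\varepsilon f$ with respect to the linear flow of $\omega=(0,\tilde\omega)$, pushing the non-resonant part of $f$ into a remainder of size $\varepsilon\mu(\varepsilon)$. Since $\omega$ has its first $d$ components zero, the only angles that matter for the averaging are $\theta_{d+1},\dots,\theta_n$, on which $\tilde\omega$ acts with the non-resonant vector $\tilde\omega$; the variables $\theta_1,\dots,\theta_d$ are ``slow'' and survive into $\bar f$.

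The plan is to construct $\Phi$ as the time-one map of the Hamiltonian flow of an auxiliary Hamiltonian $\varepsilon\chi$, where $\chi$ solves the homological equation $\{\omega\cdot\tilde I,\chi\}+f=\bar f$, i.e. $\tilde\omega\cdot\partial_{(\theta_{d+1},\dots,\theta_n)}\chi = \bar f - f$. First I would expand $f$ (or rather its non-resonant part $f-\bar f$) into a Fourier series in all angles $\theta$ and solve the homological equation mode by mode: for each Fourier index $k=(k',k'')\in\Z^d\times\Z^{n-d}$ with $k''\neq 0$ the coefficient of $\chi$ is obtained by dividing the coefficient of $f$ by $i\,k''\cdot\tilde\omega$. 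The small divisors $|k''\cdot\tilde\omega|$ are exactly the quantities controlled by the function $\Psi$ introduced before the statement, so I would truncate the Fourier series at an order $Q$ and estimate the divisors using $\Psi(Q)$, while controlling the truncation tail by the $C^l$ regularity of $f$ (each derivative gains a factor of $|k|$, so smoothness gives polynomial decay of Fourier coefficients). The optimal truncation order $Q$ is chosen to balance the divisor loss $\Psi(Q)$ against the tail, and this is precisely where the definitions of $\Delta$ and $\mu(\varepsilon)=(\Delta(\kappa\varepsilon^{-1}))^{-1}$ enter: setting $Q\sim\Delta(\kappa\varepsilon^{-1})$ makes the product $Q\Psi(Q)\sim\kappa\varepsilon^{-1}$ and yields a solution $\chi$ with $|\chi|_{C^{l-1}}\lesssim 1$ after losing one derivative to the division. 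The smallness condition $\mu(\varepsilon)\le\mu_*$ guarantees $\varepsilon$ is small enough that the following estimates close.

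Next I would estimate the transformation and the new remainder. The time-one flow $\Phi$ of $\varepsilon\chi$ satisfies $|\Phi-\mathrm{Id}|_{C^{l-1}}\lesssim\varepsilon|\chi|_{C^{l-1}}\lesssim\varepsilon\le\mu(\varepsilon)$ up to constants (using $\varepsilon\le\mu(\varepsilon)$, which holds since $\mu(\varepsilon)\ge\varepsilon$ by the discussion after~\eqref{mu}), giving the first claimed estimate with a suitable $C_*$; the image of $\Phi$ stays within $\T^n\times B_3$ provided the shift is small, which is why the domain shrinks from $B_3$ to $B_2$. By Taylor expansion of $\tilde H\circ\Phi$ along the flow, the transformed Hamiltonian is $\omega\cdot\tilde I+\varepsilon\bar f+\varepsilon\{f,\chi\}+(\text{higher order in }\varepsilon)+(\text{truncation remainder})$; collecting the Poisson-bracket term, the $O(\varepsilon^2)$ terms, and the Fourier tail into $\varepsilon\mu(\varepsilon)\tilde f$, and verifying $|\tilde f|_{C^{l-1}}\le C_*$, completes the argument. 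The main obstacle is the simultaneous control of the small-divisor loss and the Fourier tail under only finite ($C^l$) differentiability: one cannot take $Q\to\infty$, and the bookkeeping that pins the optimal $Q$ to $\Delta(\kappa\varepsilon^{-1})$ — so that both the divisor contribution and the tail are bounded by $\mu(\varepsilon)$ times the leading size — is the delicate quantitative heart of the proof. This is exactly the construction carried out in~\cite{Bou13a}, which the author cites as the source to be recalled.
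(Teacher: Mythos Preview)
The paper does not prove Proposition~\ref{normalform} at all: it simply records that the statement is a special case of Theorem~1.1 of~\cite{Bou13b} and refers there for the argument. Your sketch therefore goes well beyond what the paper supplies, and it correctly captures the architecture of that cited proof --- homological equation for $f-\bar f$ in the fast angles $\theta_{d+1},\dots,\theta_n$, Fourier truncation at order $Q=\Delta(\kappa\varepsilon^{-1})$ to balance the small-divisor loss $\Psi(Q)$ against the tail, and $\Phi$ taken as the time-one map of the Hamiltonian flow of the generating function.

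One quantitative point in your outline is not quite right and is worth flagging, since it is exactly where the function $\mu(\varepsilon)$ earns its keep. The claim $|\chi|_{C^{l-1}}\lesssim 1$ is too optimistic: dividing each Fourier coefficient by $k''\cdot\tilde\omega$ costs a factor $\Psi(Q)$, which is in general unbounded as $Q\to\infty$. The honest estimate on the generating function is $|\varepsilon\chi|_{C^{l-1}}\lesssim \varepsilon\Psi(Q)$ (times constants depending on $n,C_1,C_2$), and it is the defining relation $Q\Psi(Q)\le\kappa\varepsilon^{-1}$ for $Q=\Delta(\kappa\varepsilon^{-1})$ that converts this into $\varepsilon\Psi(Q)\le\kappa Q^{-1}=\kappa\mu(\varepsilon)$, hence $|\Phi-\mathrm{Id}|_{C^{l-1}}\le C_*\mu(\varepsilon)$. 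Your route ``$|\chi|\lesssim 1$, then $\varepsilon\le\mu(\varepsilon)$'' reaches the same bound but hides the mechanism; in particular, the inequality $\varepsilon\le\mu(\varepsilon)$ alone would not explain why the transformation is $\mu(\varepsilon)$-close to the identity rather than merely $\varepsilon\Psi(Q)$-close with an uncontrolled $\Psi(Q)$. The same balance governs the remainder: the Fourier tail beyond order $Q$ contributes $O(Q^{-1})=O(\mu(\varepsilon))$ from the loss of one derivative, and the second-order Lie terms contribute $O(\varepsilon\Psi(Q))=O(\mu(\varepsilon))$, which together give the bound on $\tilde f$.
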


This is a special case of Theorem $1.1$ of \cite{Bou13b}, to which we refer for a proof. 

\paraga The proof of Theorem~\ref{mainthm1} will follow from direct arguments using the normal form Proposition~\ref{normalform}.

\begin{proof}[Proof of Theorem~\ref{mainthm1}]
Under the assumptions of Theorem~\ref{mainthm1}, it is sufficient to prove that there exist positive constants $\mu_0=\mu_0(n,C_1,C_2,A_i^*,\tilde{I}^0)$, $c=c(n,C_1,A_i^*,\tilde{I}^0)$, $\delta=\delta(n,C_1,A_i^*,\tilde{I}^0)$ and $C=C(n,C_1,C_2,A_i^*,\tilde{I}^0)$ such that if 
\[ 0<\mu(\varepsilon)\leq \mu_0, \]
then there exists a solution $(\theta(t),\tilde{I}(t))$ of the Hamiltonian system associated to~\eqref{Ham2} such that
\[ \tilde{I}(0)=\tilde{I^0}, \quad |\tilde{I}(\tau)-\tilde{I}(0)|\geq |\tilde{I}_i(\tau)-\tilde{I}_i(0)|\geq c, \quad \tau:=\delta\varepsilon^{-1} \] 
and
\[ \max_{d+1 \leq j \leq n}|\tilde{I}_j(\tau)-\tilde{I}_j(0)|\leq C\mu(\varepsilon).  \]
Indeed, in view of the scalings~\eqref{scale1}, this solution $(\theta(t),\tilde{I}(t))$ of the Hamiltonian system associated to~\eqref{Ham2} gives a solution $(\theta(t),I(t))=(\theta(t),\varepsilon\tilde{I}(t))$ of the Hamiltonian system associated to~\eqref{Ham1} which satisfies the conclusions of Theorem~\ref{mainthm1}.

So let us prove the above statement for the Hamiltonian~\eqref{Ham2}, and to simplify notations, we remove the tilde so that the Hamiltonian~\eqref{Ham2} now reads
\[ H(\theta,I)=\omega\cdot I+\varepsilon A(\theta)I\cdot I+\varepsilon^2R(\theta,I)=\omega\cdot I+\varepsilon f(\theta,I). \] 
Fix $I_0 \in \R^n$ such that $|I_0|=1$ and $A_i^*I_0\cdot I_0 \neq 0$. Thus we can find a positive constant $\gamma_i^*=\gamma_i^*(I_0,A_i^*)$ such that 
\begin{equation}\label{min}
|A_i^*I_0\cdot I_0|=|\partial_{\theta_i}A(\theta^*)I_0\cdot I_0| \geq 4\gamma_i^*. 
\end{equation}
We first choose $\mu_0 \leq \mu_*$ so that Proposition~\ref{normalform} can be applied: it gives the existence of a symplectic embedding $\Phi : \T^n \times B_2 \rightarrow \T^n \times B_3$ of class $C^{l-1}$ such that
\[ H \circ \Phi (\theta,I) = \omega \cdot I +\varepsilon \bar{f}(\theta_1,\dots,\theta_d,I)+ \varepsilon\mu(\varepsilon)\tilde{f}(\theta,I) \]
with the estimates 
\begin{equation}\label{estdist}
|\Phi-\mathrm{Id}|_{C^{l-1}(\T^n \times B_2)} \leq C_*\mu(\varepsilon)
\end{equation}
and 
\begin{equation*}
|\tilde{f}|_{C^{l-1}(\T^n \times B_2)} \leq C_*.
\end{equation*}
Recalling that
\[ \bar{f}(\theta_1,\dots,\theta_d,I) 
=\bar{A}(\theta_1,\dots,\theta_d)I\cdot I +\varepsilon \bar{R}(\theta_1,\dots,\theta_d,I)
\] 
and that $\varepsilon \leq \mu(\varepsilon)$, if we set
\[ f'(\theta,I):=\tilde{f}(\theta,I)+\varepsilon\mu(\varepsilon)^{-1}\bar{R}(\theta_1,\dots,\theta_d,I) \]
then
\[ H \circ \Phi (\theta,I) = \omega \cdot I +\varepsilon \bar{A}(\theta_1,\dots,\theta_d)I\cdot I+ \varepsilon\mu(\varepsilon)f'(\theta,I) \]
with
\begin{equation}\label{estrem}
|f'|_{C^{l-1}(\T^n \times B_2)} \leq C':=C_*+C_2.
\end{equation}

Let us consider the solution $(\theta(t),I(t))$ of the Hamiltonian system associated to $H \circ \Phi$, with initial condition $(\theta(0),I(0))=\Phi(\theta^*,I_0)$, where, abusing notations, we identify $\theta^* \in \T^d$ with $(\theta^*,0) \in \T^n=\T^d \times \T^{n-d}$. Using the estimate~\eqref{estdist}, and choosing $\mu_0$ sufficiently small with respect also to $\gamma_i^*$, one can ensure that 
\begin{equation}\label{min2}
|\partial_{\theta_i}A(\theta(0))I(0)\cdot I(0)| \geq 3\gamma_i^*. 
\end{equation}
The solution $(\theta(t),I(t))$ satisfies, for any $1 \leq l \leq d$ and $d+1 \leq j \leq n$, the equations
\begin{equation}\label{mouve}
\begin{cases}
\dot{I}_l(t)=-\varepsilon\partial_{\theta_l}A(\theta_1(t),\dots,\theta_d(t))I(t)\cdot I(t) -\varepsilon\mu(\varepsilon)\partial_{\theta_l}f'(\theta(t),I(t)), \\
\dot{I}_j(t)=-\varepsilon\mu(\varepsilon)\partial_{\theta_j}f'(\theta(t),I(t)), \\
\dot{\theta}_l(t)=2\varepsilon A(\theta_1(t),\dots,\theta_d(t))I(t)\cdot e_l+\varepsilon\mu(\varepsilon)\partial_{I_l} f'(\theta(t),I(t)), \\
\dot{\theta}_j(t)=\tilde{\omega}_j+2\varepsilon A(\theta_1(t),\dots,\theta_d(t))I(t)\cdot e_j+\varepsilon\mu(\varepsilon)\partial_{I_j} f'(\theta(t),I(t))
\end{cases}
\end{equation}
where $\tilde{\omega}=(\tilde{\omega}_{d+1},\dots,\tilde{\omega}_n) \in \R^{n-d}$. For some $0<\delta<1$ to be chosen below, let $\tau=\delta\varepsilon^{-1}$. From the first three equations of~\eqref{mouve}, one gets
\[ \max_{1 \leq l \leq d}|I_l(t)-I_l(0)|\leq \delta(4nC_1+\mu(\varepsilon)C')\leq \delta(4nC_1+1), \quad 0 \leq t \leq \tau \]
\[ \max_{d+1 \leq j \leq n}|I_j(t)-I_j(0)|\leq \delta\mu(\varepsilon)C' \leq \delta, \quad 0 \leq t \leq \tau \]
\[ \max_{1 \leq l \leq d}|\theta_l(t)-\theta_l(0)|\leq \delta(8nC_1+\mu(\varepsilon)C')\leq \delta(8nC_1+1), \quad 0 \leq t \leq \tau \]
provided $\mu_0 \leq 1/C'$. From these last estimates, and the estimate~\eqref{min2}, one can choose $\delta$ in terms of $n$, $C_1$ and $\gamma_i^*$ such that 
\begin{equation*}
|\partial_{\theta_i}A(\theta(t))I(t)\cdot I(t)| \geq 2\gamma_i^*, \quad 0 \leq t \leq \tau, 
\end{equation*}
and, assuming $\mu_0 \leq \gamma_i^*/C'$, one obtains
\begin{equation*}
|\varepsilon\partial_{\theta_i}A(\theta_1(t),\dots,\theta_d(t))I(t)\cdot I(t) -\varepsilon\mu(\varepsilon)\partial_{\theta_i}f'(\theta(t),I(t))|\geq \varepsilon\gamma_i^*, \quad 0 \leq t \leq \tau.
\end{equation*}
From the first equation of~\eqref{mouve} this implies
\[ |I_i(\tau)-I_i(0)|\geq \delta\gamma_i^* \]
whereas from the second equation, we recall that
\[ \max_{d+1 \leq j \leq n}|I_j(t)-I_j(0)|\leq \delta\mu(\varepsilon)C', \quad 0 \leq t \leq \tau. \]
This solution of the system associated to $H \circ \Phi$ gives rise to a solution of the system associated to $H$, that we still denote, abusing notations, by $(\theta(t),I(t))$. It satisfies $I(0)=I_0$ and, requiring that $\mu_0 \leq (2C_*)^{-1}\delta\gamma_i^*$ and $\mu_0 \leq (2C_*)^{-1}\delta C'$, one obtains from the estimate~\eqref{estdist} that
\[ |I_i(\tau)-I_i(0)|\geq \delta\gamma_i^*/2  \]
and    
\[ \max_{d+1 \leq j \leq n}|I_j(t)-I_j(0)|\leq \delta\mu(\varepsilon)C'/2, \quad 0 \leq t \leq \tau. \]
Letting $c:=\delta\gamma_i^*/2$ and $C:=\delta C'/2$, this proves the statement.
\end{proof}

\paraga The proof of Theorem~\ref{mainthm2} and Theorem~\ref{mainthm3} are direct consequences from the proof of Theorem~\ref{mainthm1}.

\begin{proof}[Proof of Theorem~\ref{mainthm2}]
We proceed exactly as in Theorem~\ref{mainthm1}, choosing $I_0=e_i$ and thus $\gamma_i^*=-a_i^*/4$. The choice of the sign of $a_i^*$ allows us to ensure that
\[ |I_i(\tau)-I_i(0)|=I_i(\tau)-I_i(0)\geq c \] 
and hence
\[ I_i(\tau)\geq I_i(0)+c=1+c.  \]
Since $I_k(0)=0$ for $1 \leq k \leq n$, $k \neq i$, it is easy to observe that $|I_k(\tau)|<1$ and therefore
\[ |I(\tau)|=I_i(\tau)\geq 1+c=(1+c)|I(0)|. \]
\end{proof}  

\begin{proof}[Proof of Theorem~\ref{mainthm3}]
Once again, we proceed exactly as in Theorem~\ref{mainthm1}, choosing an arbitrary $I_0$ with $|I_0|=1$ and with $\gamma_i^*=-\lambda_i^*/4$. 
\end{proof}  

\paraga To conclude, let us give the proof of Theorem~\ref{mainthm4}, following the arguments of~\cite{Bou14}, which is based on the application of the main theorem of~\cite{Pos82}.

\begin{proof}[Proof of Theorem~\ref{mainthm4}]
In view of the scalings~\eqref{scale1}, it is sufficient to prove the statement for a Hamiltonian $\tilde{H}$ as in~\eqref{Ham2}, defined on the domain $\T^n \times B_3$. We assume that $\mu_0 \leq \mu_*$ so that Proposition~\ref{normalform} can be applied. Then observe it is sufficient to prove the statement for the Hamiltonian $\tilde{H} \circ \Phi$: indeed, $\Phi$ is symplectic and the estimate~\eqref{estdist} implies that it has a Jacobian arbitrarily close to one, hence the measure estimate for the set of invariant tori for $\tilde{H} \circ \Phi$ is equivalent to the measure estimate for the set of invariant tori for $\tilde{H}$.

Let us denote $H=\tilde{H}\circ \Phi$. Using the assumption $(A.4)$ and proceeding as in the proof of Theorem~\ref{mainthm1}, it can be written as
\[ H (\theta,I) = \omega \cdot I +\varepsilon A_0I\cdot I+ \varepsilon\mu(\varepsilon)f'(\theta,I) \]
where $A_0 \in \mathrm{Sym}(n,\R)$ is a non-singular matrix and with
\begin{equation*}
|f'|_{C^{l-1}(\T^n \times B_2)}\leq C'.
\end{equation*}
Up to rescaling time by $\varepsilon$, it is sufficient to prove the statement for the Hamiltonian
\[ \varepsilon^{-1}H (\theta,I) = \varepsilon^{-1}\omega \cdot I +A_0I\cdot I+ \mu(\varepsilon)f'(\theta,I):=H_0(I)+\mu(\varepsilon)H_1(\theta,I). \]
The above Hamiltonian satisfies the assumption of Theorem A in~\cite{Pos82}. Indeed, the Hamiltonian is of class $C^{l-1}$, with $l-1\geq l_0>3n-1$, the integrable part $H_0$ is real-analytic and its Hessian at any point is constantly equal to the non-singular matrix $2A_0$, hence it is bounded by $C_1$ and its inverse is bounded by some uniform positive constant $\tilde{C}_1$. Assuming that $\sqrt{\mu(\varepsilon)} \leq \tilde{\mu}$ where $\tilde{\mu}=\tilde{\mu}(n,C_1,C')$ is some positive constant, the statement follows directly from Theorem A in~\cite{Pos82}. 
\end{proof}

\addcontentsline{toc}{section}{References}
\bibliographystyle{amsalpha}
\bibliography{UnstableTori}

\end{document}